\documentclass[12pt]{amsart}
\usepackage{amscd,amsmath,amsthm,amssymb}
\usepackage{amsfonts,amssymb,amscd,amsmath,enumerate,verbatim}
\usepackage[left]{lineno}
\usepackage{pstricks, pst-plot,pst-3d}
\usepackage{tikz}
\newpsstyle{fatline}{linewidth=1.5pt}
\definecolor{verylight}{gray}{0.97}
\definecolor{light}{gray}{0.9}
\definecolor{medium}{gray}{0.85}
\definecolor{dark}{gray}{0.6}
\def\Gc{{\mathcal G}}

\def\Gc{{\mathcal G}}

\def\opn#1#2{\def#1{\operatorname{#2}}} 
\opn\chara{char} \opn\length{\ell} \opn\pd{pd} \opn\rk{rk}
\opn\projdim{proj\,dim} \opn\injdim{inj\,dim} \opn\rank{rank}
\opn\depth{depth} \opn\grade{grade} \opn\height{height}
\opn\embdim{emb\,dim} \opn\codim{codim}
\opn\Cl{Cl}

\opn\Tr{Tr} \opn\bigrank{big\,rank}
\opn\superheight{superheight}\opn\lcm{lcm}
\opn\trdeg{tr\,deg}
	\opn\reg{reg} \opn\lreg{lreg} \opn\ini{in} \opn\lpd{lpd}
	\opn\size{size} \opn\sdepth{sdepth}
	\opn\link{link}\opn\fdepth{fdepth}\opn\lex{lex}
	\opn\tr{tr}\opn\del{del}
	\opn\type{type}
	\opn\gap{gap}
	\opn\arithdeg{arith-deg}
	\opn\revlex{revlex}
	\opn\div{div} \opn\Div{Div} \opn\cl{cl} \opn\Cl{Cl}
	\opn\Spec{Spec} \opn\Supp{Supp} \opn\supp{supp} \opn\Sing{Sing}
	\opn\Ass{Ass} \opn\Min{Min}\opn\Mon{Mon}
	\opn\Ann{Ann} \opn\Rad{Rad} \opn\Soc{Soc}
	\opn\Im{Im} \opn\Ker{Ker} \opn\Coker{Coker} \opn\Am{Am}
	\opn\Hom{Hom} \opn\Tor{Tor} \opn\Ext{Ext} \opn\End{End}
	\opn\Aut{Aut} \opn\id{id}
	
	\opn\nat{nat}
	\opn\pff{pf}
	\opn\Pf{Pf} \opn\GL{GL} \opn\SL{SL} \opn\mod{mod} \opn\ord{ord}
	\opn\Gin{Gin} \opn\Hilb{Hilb}\opn\sort{sort}
	\opn\PF{PF}\opn\Ap{Ap}
	\opn\mult{mult}
	\opn\bight{bight}
	\opn\div{div}
	\opn\Div{Div}
	\opn\aff{aff}
	\opn\relint{relint} \opn\st{st}
	\opn\lk{lk} \opn\cn{cn} \opn\core{core} \opn\vol{vol}  \opn\inp{inp} \opn\nilpot{nilpot}
	\opn\link{link} \opn\star{star}\opn\lex{lex}\opn\set{set}
	\opn\width{wd}
	\opn\Fr{F}
	\opn\QF{QF}
	\opn\G{G}
	\opn\type{type}\opn\res{res}
	\opn\conv{conv}
	\opn\Deg{Deg}
	\opn\Sym{Sym}
	\opn\Con{Con}
	\opn\gr{gr}
	
	\def\pot#1#2{#1[\kern-0.28ex[#2]\kern-0.28ex]}

	\opn\dirlim{\underrightarrow{\lim}}
	\opn\inivlim{\underleftarrow{\lim}}
	\let\to=\rightarrow
	
	\def\Implies{\ifmmode\Longrightarrow \else
		\unskip${}\Longrightarrow{}$\ignorespaces\fi}
	\def\implies{\ifmmode\Rightarrow \else
		\unskip${}\Rightarrow{}$\ignorespaces\fi}
	\def\iff{\ifmmode\Longleftrightarrow \else
		\unskip${}\Longleftrightarrow{}$\ignorespaces\fi}

	\let\:=\colon
	\newtheorem{Theorem}{Theorem}[section]
	\newtheorem{Lemma}[Theorem]{Lemma}

	\newtheorem{Definition}[Theorem]{Definition}
	
	\newtheorem{Conjecture}[Theorem]{Conjecture}

	\let\epsilon\varepsilon
	\let\kappa=\varkappa
	\def\qed{\ifhmode\textqed\fi
		\ifmmode\ifinner\quad\qedsymbol\else\dispqed\fi\fi}
	\def\textqed{\unskip\nobreak\penalty50
		\hskip2em\hbox{}\nobreak\hfil\qedsymbol
		\parfillskip=0pt \finalhyphendemerits=0}
	\def\dispqed{\rlap{\qquad\qedsymbol}}
	
	\opn\dis{dis}
	\def\pnt{{\raise0.5mm\hbox{\large\bf.}}}
	
	\opn\Lex{Lex}

\begin{document}
\title[Modular lattices]{Modular lattices and algebras with straightening laws}

\author[T.~Hibi]{Takayuki Hibi}
\author[S.~A.~ Seyed Fakhari]{Seyed Amin Seyed Fakhari}

\address{(Takayuki Hibi) Department of Pure and Applied Mathematics, Graduate School of Information Science and Technology, Osaka University, Suita, Osaka 565--0871, Japan}
\email{hibi@math.sci.osaka-u.ac.jp}
\address{(Seyed Amin Seyed Fakhari) Departamento de Matem\'aticas, Universidad de los Andes, Bogot\'a, Colombia}
\email{s.seyedfakhari@uniandes.edu.co}

\subjclass[2020]{05E40, 13H10, 06D05}

\keywords{modular lattice, algebra with straightening laws}

\begin{abstract}
The conjecture that every modular lattice is integral is disproved.
 \end{abstract}	
\maketitle
\thispagestyle{empty}

\section*{Introduction}
A partially ordered set is called a poset.  Every poset to be considered is finite.  We say that a poset $P$ is {\em integral} if there exists an algebra with straightening laws \cite{Eis} on $P$ over a field $K$ which is a homogeneous domain.  Every distributive lattice is integral \cite{Hibi}.  In the present paper, we disprove the conjecture, proposed in \cite{Hibi}, that every modular lattice is integral.

\section{Algebras with straightening laws}
Let $R = \bigoplus_{n=0}^{\infty} R_n$ be a noetherian graded algebra over a field $R_0=K$.  Let $P$ be a poset and suppose that an injection $\varphi: P \hookrightarrow \bigcup_{n=1}^{\infty} R_n$ for which the $K$-algebra $R$ is generated by $\varphi(P)$ over $K$ is given.  A {\em standard monomial} is a homogeneous element of $R$ of the form $\varphi(\gamma_1) \varphi(\gamma_2)\cdots \varphi(\gamma_s)$, where $\gamma_1 \leq \gamma_2 \leq \cdots \leq \gamma_s$ in $P$.  We call $R$ an {\em algebra with straightening laws} \cite{Eis} on $P$ over $K$ if the following conditions are satisfied:
\begin{itemize}
\item[]
(ASL\,-1)
The set of standard monomials is a $K$-basis of $R$;
\item[]
(ASL\,-2)
If $\alpha$ and $\beta$ in $P$ are incomparable and if
\begin{eqnarray*}
\label{ASL}
\, \, \, \, \, \, \, \, \, \, \, \, \, \, \, \, \, \, \, \,
\varphi(\alpha)\varphi(\beta)
= \sum_{i} r_i\,\varphi(\gamma_{i_1})\varphi(\gamma_{i_2}) \cdots , \, \, \, 0 \neq r_i \in K, \, \, \, \gamma_{i_1}\leq \gamma_{i_2} \leq \cdots,
\end{eqnarray*}
is the unique expression for $\varphi(\alpha)\varphi(\beta) \in R$ as a linear combination of distinct standard monomials guaranteed by (ASL\,-1), then $\gamma_{i_1} \leq \alpha, \beta$ for every $i$.
\end{itemize}
The right-hand side of the relation in (ASL\,-2) is allowed to be the empty sum $(=0)$.  We abbreviate an algebra with straightening laws as ASL.  The relations in (ASL\,-2) are called the {\em straightening relations} for $R$.

Let $S=K[x_{\alpha} : \alpha \in P]$ denote the polynomial ring in $|P|$ variables over $K$ and define the surjective ring homomorphism $\pi: S \to R$ by setting $\pi(x_\alpha) = \varphi(\alpha)$. The defining ideal $I_R$ of $R=\bigoplus_{n=0}^{\infty} R_n$ is the kernel $\ker(\pi)$ of $\pi$. When $\alpha,\beta \in P$ are incomparable, we introduce the polynomial
\[
f_{\alpha,\beta}
:= x_\alpha x_\beta - \sum_{i} r_i\,x_{\gamma_{i_1}}x_{\gamma_{i_2}} \cdots x_{\gamma_{i_{n_i}}}, \, \text{ with } 0 \neq r_i \in K, \, \, \, \gamma_{i_1}\leq \gamma_{i_2} \leq \cdots,
\]
arising from (ASL-2). Then $f_{\alpha,\beta} \in I_R$. Let $\Gc_R$ denote the set of those polynomials $f_{\alpha,\beta}$ for which  $\alpha$ and $\beta$ are incomparable in $P$. Let $<_{\mathrm{rev}}$ denote the reverse lexicographic order \cite[Example~2.1.2 (b)]{HHgtm260} on $S$ induced by an ordering of the variables for which $x_{\alpha} <_{\mathrm{rev}} x_{\beta}$ if $\alpha < \beta$ in $P$. It follows from (ASL-1) that $\Gc_R$ is a Gr\"obner basis of $I_R$ with respect to $<_{\mathrm{rev}}$. In particular, $I_R$ is generated by $\Gc_R$.

Recall that a {\em homogeneous algebra} is a noetherian graded algebra $R = \bigoplus_{n=0}^{\infty} R_n$ over a field $R_0=K$ with $R=K[R_1]$.

\begin{Definition}
\label{integral}
{\em
A poset $P$ is called {\em integral} if there exists a noetherian graded domain $R = \bigoplus_{n=0}^{\infty} R_n$ over a field $R_0=K$ for which $R$ is an ASL on $P$ over $K$ with an injection $\varphi: P \hookrightarrow R_1$.  In particular, $R$ is a homogeneous domain.
}
\end{Definition}

Clearly, if a poset $P$ is integral, then $P$ has a unique minimal element.  Every ASL domain $R$ with $\dim R\leq 3$ is Cohen--Macaulay \cite{HW}.  In \cite{Terai}, an ASL domain with $\dim R= 4$ which is not Cohen--Macaulay is constructed.  Every distributive lattice is integral \cite{Hibi}.

\begin{Conjecture}[\cite{Hibi}]
\label{disprove}
Every modular lattice is integral.
\end{Conjecture}

\section{Hilbert functions of Cohen--Macaulay homogeneous domains}
Let $R = \bigoplus_{n=0}^{\infty} R_n$ be a homogeneous algebra over a field $R_0=K$ and $H(R,n)=\dim_K R_n$ its Hilbet function.  The Hilbert series $F(R,\lambda) = \sum_{n=0}^{\infty} H(R,n) \lambda^n$ of $R$ is of the form $(h_0 + h_1\lambda + \cdots + h_s \lambda^s)/(1 - \lambda)^d$, where each $h_i$ is an integer with $h_s \neq 0$ and where $d= \dim R$.  We call $h(R) = (h_0,h_1, \ldots, h_s)$ the {\em $h$-vector} of $R$.  If $R$ is Cohen--Macaulay, then each $h_i > 0$.  If $R$ is Gorenstein, then $h_i = h_{s-i}$ for all $1\leq i \leq s$.  We refer the reader to \cite{Sta78} for the detailed information about Hilbert functions of homogeneous algebras.

\begin{Lemma}[Stanley \cite{Sta91}]
    \label{Stanley}
    The $h$-vector $h(R) = (h_0,h_1, \ldots, h_s)$ of a Cohen--Macaulay homogeneous domain $R$ satisfies the inequalities
    \begin{eqnarray}
        \label{MIT}
    h_0+h_1 + \cdots + h_j \leq h_s + h_{s-1} + \cdots + h_{s-j}, \quad 1 \leq j \leq [s/2].
    \end{eqnarray}
\end{Lemma}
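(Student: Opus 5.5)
This is Stanley's theorem; the plan is to reproduce his argument via the canonical module. Since $K$ may be replaced by an infinite extension field without changing $h$-vectors or the Cohen--Macaulay property, assume $K$ is infinite. Put $d=\dim R$ and let $\omega_R$ be the canonical module of $R$.

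The starting point is the formula for the Hilbert series of the canonical module of a Cohen--Macaulay graded ring:
\[
F(\omega_R,\lambda)=(-1)^dF(R,1/\lambda)=\frac{h_s+h_{s-1}\lambda+\cdots+h_0\lambda^s}{(1-\lambda)^d}\,\lambda^{d-s}.
\]
Shift gradings so that $\omega=\omega_R(d-s)$ has Hilbert series with numerator $h_s+h_{s-1}\lambda+\cdots+h_0\lambda^s$. In particular $\omega_0\neq 0$ and $\omega_i=0$ for $i<0$.

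Next I use that $R$ is a domain. The canonical module of a Cohen--Macaulay domain is torsion-free of rank one, so $\omega$ is isomorphic to a graded ideal of $R$ up to shift. Take a nonzero element $u\in\omega_0$. Multiplication by $u$ gives an injective graded map $R\to\omega$, since $\omega$ is torsion-free and $R$ is a domain. Let $C=\omega/uR$. Then
\[
F(C,\lambda)=\frac{\sum_i (h_{s-i}-h_i)\lambda^i}{(1-\lambda)^d}.
\]

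The key step, and the main obstacle, is to show that the partial sums $\sum_{i\le j}(h_{s-i}-h_i)$ are nonnegative for $j\le[s/2]$. Positivity of coefficients alone is not enough, because $C$ need not be Cohen--Macaulay and its dimension is at most $d-1$. I would follow Stanley here.

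First, $C$ is either zero or of dimension exactly $d-1$. It has rank zero, and because $\omega$ is maximal Cohen--Macaulay, its quotient by a nonzerodivisor-type submodule satisfies $\depth C\ge d-1$. So $C$ is Cohen--Macaulay of dimension $d-1$, or $C=0$.

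Then $F(C,\lambda)=(1-\lambda)\cdot\big(\text{nonnegative polynomial}\big)/(1-\lambda)^d$. Equivalently, $\sum_i(h_{s-i}-h_i)\lambda^i=(1-\lambda)\,g(\lambda)$, where $g$ is the $h$-polynomial of the Cohen--Macaulay module $C$ of dimension $d-1$ and has nonnegative coefficients. The coefficients of $g$ are exactly the partial sums $\sum_{i\le j}(h_{s-i}-h_i)$, which gives \eqref{MIT}.

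The delicate point is verifying $\depth C\ge d-1$. For this I would use the depth lemma on $0\to R\to\omega\to C\to 0$: both $R$ and $\omega$ have depth $d$, so $\depth C\ge d-1$. Then I need $\dim C\le d-1$, which holds because $C$ is torsion over a domain, being a quotient of two rank-one modules. These together give that $C$ is Cohen--Macaulay of dimension $d-1$ when $C\ne 0$, and the inequality follows for all $j$, in particular for $1\le j\le[s/2]$.
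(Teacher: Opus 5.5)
Your argument is correct and is essentially Stanley's own proof, which the paper cites but does not reproduce. You embed $R$ into the shifted canonical module via a nonzero element of lowest degree, then use torsion-freeness and the depth lemma to see that the cokernel $C$ is zero or Cohen--Macaulay of dimension $d-1$; its $h$-polynomial, whose coefficients are exactly the partial sums $\sum_{i\le j}(h_{s-i}-h_i)$, is therefore nonnegative.

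The only point to tighten is the opening reduction to an infinite field, since an arbitrary infinite extension can destroy the domain property. For example, $\mathbb{F}_3[x,y]/(x^2+y^2)$ is a domain but is not one over $\mathbb{F}_9$. Either pass to $K(t)$, where $R\otimes_K K(t)$ is a localization of the domain $R[t]$, or extend the field only at the final step, where just the Cohen--Macaulayness and Hilbert series of $C$ are used.
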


Let $R = \bigoplus_{n=0}^{\infty} R_n$ be a homogeneous ASL on a poset $P$ over a field $R_0=K$ with an injection $\varphi: P \hookrightarrow R_1$ and $\Delta(P)$ the order complex of $P$.  It follows from (ASL-1) that the $h$-vector of $R$ is equal to the nonzero components of the $h$-vector $h(\Delta(P))$ of $\Delta(P)$.  In other words, if $h(\Delta(P))=(h_0,h_1, \ldots, h_s, 0,\ldots,0)$ with $h_s \neq 0$, then $h(R) = (h_0,h_1,\ldots, h_s)$.  We refer the reader to \cite{StaGREEN} and \cite{HIBIred} for the background on combinatorics of simplicial complexes and their $h$-vectors.

\section{Non-integral modular lattices}
Let $L$ be a modular lattice and $R = \bigoplus_{n=0}^{\infty} R_n$ an ASL domain on $L$ over a field $R_0=K$ with an injection $\varphi: P \hookrightarrow R_1$.  Since every modular lattice is Cohen--Macaulay \cite{Bjo}, it follows from \cite[Corollary 4.2]{Eis} that $R$ is Cohen--Macaulay.  Hence, Lemma \ref{Stanley} guarantees that the $h$-vector $$h(\Delta(L))=(h_0,h_1, \ldots, h_s, 0, \ldots, 0), \quad h_s \neq 0$$ of $\Delta(L)$ satisfies the inequalities (\ref{MIT}).

Now, in order to disprove Conjecture \ref{disprove}, our task is to find a modular lattice $L$ for which $h(\Delta(L))$ does not satisfy the inequalities (\ref{MIT}).  Recall that a lattice $L$ is modular if $a,b,c \in L$ with $a \leq c$, then $a \vee(b \wedge c) = (a \vee b) \wedge c$.

Let $P$ be a poset and $x \in P$.  Let $P_x = P \cup x'$, where $x' \not\in P$, and define the partial order on $P_x$ as follows:

(i) if $b,c \in P_x$ with $b \neq x'$ and $c \neq x'$, then $b < c$ in $P_x$ if and only if $b < c$ in $P$;

(ii) if $b \in P_x$ with $b \neq x'$, then $b < x'$ (resp. $x' < b$) in $P_x$ if and only if $b < x$ (resp. $x < b$) in $P$;

(iii) $x$ and $x'$ are incomparable in $P_x$.

\noindent
We say that $P_x$ is a {\em duplication} of $P$ at $x$.

An element $x$ of a lattice is called {\em join-irreducible} (resp. {\em meet-irreducible}) if there is a unique element $y \in L$ for which $y < x$ (resp. $x < y$) and $y < z < x$ (resp. $x < z < y$) for no $z \in L$.

\begin{Lemma}
    \label{duplication}
Let $L$ be a modular lattice and suppose that $x \in L$ is join-irreducuble and meet-irreducible.  Then the duplication $L_x$ of $L$ at $x$ is a modular lattice.
\end{Lemma}

\begin{proof}
Since $x$ is join-irreducuble and meet-irreducible, if $a \neq b$, then $a \vee b \neq x$ and $a \wedge b \neq x$ for all $a,b \in L$.  Hence $L_x$ is a lattice.

A classical result by Dedekind \cite[Chapter 3, Exercise 30]{EC1} guarantees that a lattice $L$ is modular if and only if no sublattice of $L$ is the lattice $D_5$ of Figure 1.

\begin{figure}
\centering
\begin{tikzpicture}[scale=1.5]
\node[draw,shape=circle] (x0) at (0,1) {};
\node[draw,shape=circle] (x1) at (1,0) {};
\node[draw,shape=circle] (x2) at (2,0.5) {};
\node[draw,shape=circle] (x3) at (2,1.5) {};
\node[draw,shape=circle] (x4) at (1,2) {};
\draw(x0)--(x1)--(x2)--(x3)--(x4)--(x0);
\end{tikzpicture}
\caption{The lattice $D_5$.}
\end{figure}
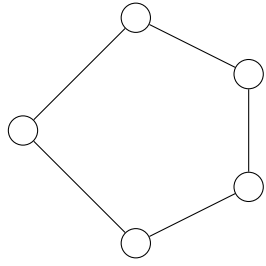

\noindent
Since no sublattice of $L$ is $D_5$ it follows that no sublattice of $L_x$ is $D_5$.  Hence $L_x$ is modular, as desired.
\, \, \, \, \, \, \, \, \, \, \, \, \, \, \, \, \, \, \, \, \, \, \, \, \, \, \, \, \, \, \, \, \, \, \, \, \, \, \, \, \,
 \end{proof}

\begin{Lemma}
    \label{exactly}
Let $P$ be a pure poset \cite[p.~115]{HIBIred} and $x \in P$.  Suppose that $x$ belongs to exactly one maximal chain of $P$.  Then
\[
h(\Delta(P_x)) = h(\Delta(P)) + (0,1,0,\ldots, 0).
\]
\end{Lemma}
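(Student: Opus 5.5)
We compare the $f$-vectors of $\Delta(P_x)$ and $\Delta(P)$ and then convert to $h$-vectors. Let $P$ be pure of rank $d-1$, so $\Delta(P)$ has dimension $d-1$ and $h$-vectors have length $d+1$. Since $x'$ is incomparable only with $x$ and relates to every other element exactly as $x$ does, the chains of $P_x$ are of two kinds. Those not containing $x'$ are exactly the chains of $P$. Those containing $x'$ are in bijection with the chains of $P$ containing $x$, via replacing $x'$ by $x$. Hence $\Delta(P_x)=\Delta(P)\cup \Gamma$, where $\Gamma$ is the set of faces containing $x'$. As a set of faces, $\Gamma$ is a copy of $\st_{\Delta(P)}(x)$ minus the faces not containing $x$. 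Also $P_x$ is again pure: its maximal chains are those of $P$ together with the copies through $x'$.

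Since $x$ lies on exactly one maximal chain $C=\{x=c_1,\ldots\}$ of $P$, with $|C|=d$, every chain containing $x$ is a subset of $C$ containing $x$. The number of such chains of cardinality $i$ is therefore $\binom{d-1}{i-1}$. Consequently
\[
f_{i-1}(\Delta(P_x))=f_{i-1}(\Delta(P))+\binom{d-1}{i-1},\qquad 1\le i\le d,
\]
with $f_{-1}$ unchanged. So the $f$-polynomial changes by the $f$-polynomial of a $(d-1)$-simplex $\sigma$ with one vertex forced into every face. In other words, it changes by the difference between the face counts of the full simplex $2^{C'}$ (with $C'=C\setminus\{x\}\cup\{x'\}$) and those of its facet $2^{C\setminus\{x\}}$.

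Next we apply the linear relation $\sum_i h_i t^{d-i}=\sum_i f_{i-1}(t-1)^{d-i}$, which for fixed $d$ is linear in $f$. The increment is therefore
\[
\sum_{i=1}^{d}\binom{d-1}{i-1}(t-1)^{d-i}=\bigl((t-1)+1\bigr)^{d-1}=t^{d-1},
\]
which is the coefficient pattern of $h_1$, giving exactly $(0,1,0,\ldots,0)$. Geometrically, this is the statement that attaching a $(d-1)$-simplex along a single $(d-2)$-face, a shelling step with restriction of size one, adds $1$ to $h_1$.

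The main point needing care is the bookkeeping of dimensions. Both complexes must be treated as $(d-1)$-dimensional, which purity guarantees, so the $h$-vectors are computed in the same length $d+1$ and the increment really lands in position $1$. One must also check that the chains through $x'$ are exactly as described, which follows directly from conditions (i)–(iii) of the duplication.
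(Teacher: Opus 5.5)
Your proposal is correct and is essentially the paper's proof: the chains through $x'$ correspond to the chains through $x$, which by purity and uniqueness of the maximal chain $C$ number $\binom{d-1}{i-1}$ in cardinality $i$, and the identity $\sum_{i=1}^{d}\binom{d-1}{i-1}(t-1)^{d-i}=t^{d-1}$ then shows the increment lands entirely in $h_1$. You also justify the bijection from conditions (i)--(iii) and check that both complexes have the same dimension, two details the paper leaves implicit.
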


\begin{proof}
Let $d - 1 = \dim \Delta(P)$ and $f_i$ the number of chains of $L$ of length $i$.  Let $h(\Delta(P)) = (h_0,h_1, \ldots, h_d)$.  One has
\[
\sum_{i=0}^{d} f_i (x-1)^{d-i} = \sum_{i=0}^{d} h_i x ^{d-i}.
\]
Let $h(\Delta(P_x)) = (h'_0,h'_1, \ldots, h'_d)$.  One has
\[
\sum_{i=0}^{d} f_i (x-1)^{d-i} + \sum_{i=1}^{d} { d-1 \choose i-1} (x-1)^{d-i} = \sum_{i=0}^{d} h'_i x ^{d-i}.
\]
Since
\begin{eqnarray*}
\sum_{i=1}^{d} { d-1 \choose i-1} (x-1)^{d-i}
& = & \sum_{i=0}^{d-1} { d-1 \choose i} (x-1)^{d-(i+1)} \\
& = & \sum_{i=0}^{d-1} { d-1 \choose i} (x-1)^{(d-1) - i} \\
& = & x^{d-1},
\end{eqnarray*}
the desired result follows.
\, \, \, \, \, \, \, \, \, \, \, \, \, \, \, \, \, \, \, \, \, \, \, \, \, \, \, \, \, \, \, \, \, \, \, \, \,
\end{proof}

\begin{Theorem}
    \label{Non-integral}
    A non-integral modular lattice exists.
\end{Theorem}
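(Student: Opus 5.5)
The plan is to construct an explicit modular lattice $L$ whose $h$-vector $h(\Delta(L))$ violates the inequality (\ref{MIT}) for $j=1$. By the discussion at the start of this section, that suffices. If $L$ were integral, an ASL domain on $L$ would be Cohen--Macaulay by \cite{Bjo} and \cite[Corollary 4.2]{Eis}. Its $h$-vector would then be the nonzero part of $h(\Delta(L))$, and Lemma \ref{Stanley} would force (\ref{MIT}) to hold.

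For the construction I start from a distributive, hence modular, lattice with $s=3$ and $h_3$ small. I take $D = C_4 \times C_4$, the product of two chains $0<1<2<3$, which is the $4\times 4$ grid. Its poset of join-irreducibles is a disjoint union of two $3$-element chains. The $h$-vector of $\Delta(D)$ is therefore the descent distribution of the linear extensions, i.e. of the shuffles of $123$ and $456$, which gives $h(\Delta(D))=(1,9,9,1)$. As a check, there are $\binom{6}{3}=20$ shuffles, exactly one shuffle $456123$ has $3$ descents, and the symmetric Narayana-type distribution is $1,9,9,1$. Here $\Delta(D)$ is the order complex of the whole lattice including $\hat0,\hat1$. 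Adding the cone points does not change the $h$-vector, so the convention does not matter.

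Next I pick $x=(3,0)\in D$. The element $x$ covers only $(2,0)$ and is covered only by $(3,1)$, so it is both join-irreducible and meet-irreducible. Moreover, every maximal chain through $x$ must pass through $(0,0),(1,0),(2,0),(3,0),(3,1),(3,2),(3,3)$. Hence $x$ lies on exactly one maximal chain. Since $D$ is pure (graded), Lemma \ref{duplication} shows that $L=D_x$ is a modular lattice. Lemma \ref{exactly} then gives
\[
h(\Delta(L))=(1,9,9,1)+(0,1,0,0)=(1,10,9,1).
\]
Here $s=3$ and $[s/2]=1$, and the inequality (\ref{MIT}) for $j=1$ would require $h_0+h_1 = 11 \le h_3+h_2 = 10$, which fails. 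So $L$ is a modular lattice that is not integral.

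The main point needing care is the $h$-vector computation for $D$ together with the verification of the hypotheses of Lemma \ref{exactly}: that $D$ is pure, and that $x$ lies on exactly one maximal chain. Both are routine for a grid corner. If the computation of $(1,9,9,1)$ is in doubt, the argument is robust anyway. Duplicating repeatedly at such corner elements (for instance also at $(0,3)$) increases $h_1$ while leaving $h_2,h_3$ fixed, and $h_3=1$ holds for any product of two chains of equal length. So eventually $1+h_1 > h_2+1$ must occur, once $h_1$ exceeds $h_2$.
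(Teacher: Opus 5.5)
Your proposal is correct and is essentially the paper's argument: you duplicate the join- and meet-irreducible corner $2^s$ of the divisor lattice of $2^s\cdot 3^t$, which lies on a unique maximal chain. This raises $h_1$ while fixing the other entries of the $h$-vector, so Stanley's inequality fails at $j=1$. You specialize to $s=t=3$ and compute $h(\Delta(D))=(1,9,9,1)$ explicitly, so a single duplication already gives $(1,10,9,1)$ with $11>10$; the paper instead leaves the $h$-vector implicit and takes $n\gg 0$ duplications.
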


\begin{proof}
    Let $L$ denote the divisor lattice \cite[p.~157]{HHgtm260} of $2^s \cdot 3^t$ with $3 \leq s \leq t$.  One has $h(\Delta(L))= (h_0, h_1, \ldots, h_s, 0, \ldots, 0)$ with $h_s \neq 0$.  Now, $x=2^s \in L$ belongs to exactly one maximal chain of $L$. We define $L^{[n]} = (L^{[n-1]})_x$ for $n \geq 2$ with $L^{[1]} = L_x$.  Since $L$ is a distributive lattice, it follows from Lemmas \ref{duplication} and \ref{exactly} that $L^{[n]}$ is a modular lattice and
    \[
    h(\Delta(L^{[n]}) = (h_0, h_1+n, h_2, \ldots, h_s, 0, \ldots, 0).
    \]
    Hence for $n \gg 0$, $h(\Delta(L^{[n]})$ fails to satisfy the inequality (\ref{MIT}) and no homogeneous ASL on $L^{[n]}$ over $K$ can be an integral domain.
    \, \, \, \, \, \, \, \, \, \, \, \, \, \, \, \, \, \, \,
\end{proof}

\begin{figure}
\centering
\begin{tikzpicture}[scale=1.5]
\begin{scope}[rotate=45]
\node[draw,shape=circle] (x0) at (0,0) {};
\node[draw,shape=circle] (x1) at (0,1) {};
\node[draw,shape=circle] (x2) at (0,2) {};
\node[draw,shape=circle] (x3) at (0,3) {};
\node[draw,shape=circle] (x4) at (1,0) {};
\node[draw,shape=circle] (x5) at (1,1) {};
\node[draw,shape=circle] (x6) at (1,2) {};
\node[draw,shape=circle] (x7) at (1,3) {};
\node[draw,shape=circle] (x8) at (2,0) {};
\node[draw,shape=circle] (x9) at (2,1) {};
\node[draw,shape=circle] (x10) at (2,2) {};
\node[draw,shape=circle] (x11) at (2,3) {};
\node[draw,shape=circle] (x12) at (3,0) {};
\node[draw,shape=circle] (x13) at (3,1) {};
\node[draw,shape=circle] (x14) at (3,2) {};
\node[draw,shape=circle] (x15) at (3,3) {};
\node[draw,shape=circle] (x16) at (4,0) {};
\node[draw,shape=circle] (x17) at (4,1) {};
\node[draw,shape=circle] (x18) at (4,2) {};
\node[draw,shape=circle] (x19) at (4,3) {};
\end{scope}
\draw(x3)node[below left=0.5mm]{\large{$2^3$}}--(x2)node[below left=0.5mm]{\large{$2^2$}}--(x1)node[below left=0.5mm]{\large{$2$}}--(x0)--(x4)node[below right=0.5mm]{\large{$3$}}--(x8)node[below right=0.5mm]{\large{$3^2$}}--(x12)node[below right=0.5mm]{\large{$3^3$}}--(x16)node[below right=0.5mm]{\large{$3^4$}}--(x17)--(x18)--(x19)--(x15)--(x11)--(x7)--(x3);
\draw(x7)--(x6)--(x5)--(x4);
\draw(x11)--(x10)--(x9)--(x8);
\draw(x15)--(x14)--(x13)--(x12);
\draw(x2)--(x6)--(x10)--(x14)--(x18);
\draw(x1)--(x5)--(x9)--(x13)--(x17);
\end{tikzpicture}
\caption{The divisor lattice of $2^3\cdot 3^4$.}
\end{figure}
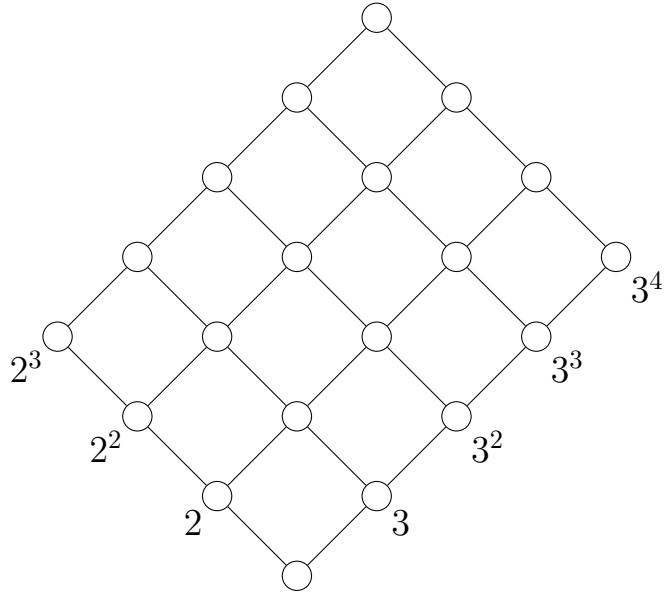

\section*{Acknowledgments}
The second author is supported by a FAPA grant from Universidad de los Andes.

\section*{Statements and Declarations}
The authors have no Conflict of interest to declare that are relevant to the content of this article.

\section*{Data availability}
Data sharing does not apply to this article as no new data were created or analyzed in this study.

\end{document}